\documentclass[11pt, a4paper]{amsart}

\usepackage[a4paper,margin=1in]{geometry}
\usepackage{amsmath,amssymb,amsthm,bm}
\usepackage{booktabs,comment}
\usepackage{xcolor}
\usepackage{listings}
\usepackage{mathrsfs}
\usepackage{multicol}

\newcommand{\PP}{\mathbb P}

\newcommand{\PGL}{\mathrm{PGL}}
\newcommand{\GL}{\mathrm{GL}}

\newcommand{\Hom}{\mathrm{Hom}}
\newcommand{\PSL}{\mathrm{PSL}}
\newcommand{\Aut}{\operatorname{Aut}}
\newcommand{\Bl}{\operatorname{Bl}}

\theoremstyle{definition}
\newtheorem{theorem}{Theorem}[section]

\newtheorem{example}[theorem]{Example}
\newtheorem{lemma}[theorem]{Lemma}
\newtheorem{proposition}[theorem]{Proposition}

\newtheorem{question}[theorem]{Question}

\definecolor{codegray}{gray}{0.96}
\definecolor{codeframe}{gray}{0.75}

\lstdefinestyle{codebase}{
  basicstyle=\ttfamily\footnotesize,
  backgroundcolor=\color{codegray},
  frame=single,
  rulecolor=\color{codeframe},
  breaklines=true,
  columns=fullflexible,
  keepspaces=true,
  showstringspaces=false,
  numbers=left,
numberstyle=\tiny,
stepnumber=1,
numbersep=8pt,
  xleftmargin=1em,
  xrightmargin=1em,
  aboveskip=0.75em,
  belowskip=0.75em
}

\lstdefinestyle{macaulay2}{
  style=codebase,
  language=,
  captionpos=b
}

\lstdefinestyle{python}{
  style=codebase,
  language=Python,
  captionpos=b
}

\usepackage{titletoc}
\usepackage[colorlinks=true, linkcolor=blue, anchorcolor=blue, citecolor=blue, filecolor=blue, menucolor= blue, urlcolor=blue,pdfencoding=auto, psdextra, pagebackref=true]{hyperref}

\title{On the Geometry of Sixers on the Fermat Cubic Surface}
\author{Giuseppe Favacchio \and Grzegorz Malara}

\thanks{\ \\
	\noindent\textbf{2020 Mathematics Subject Classification.}
     14N20, %Configurations and arrangements of linear subspaces
	14Q10, %Computational aspects of algebraic surfaces
	14J50.\\ %Automorphisms of surfaces and higher-dimensional varieties è
	\noindent\textbf{Keywords.}
	Fermat cubic surface, sixers, skew lines, automorphism groups,
	blow-down models, determinant square classes, computational algebraic geometry.}

\date{}

\begin{document}

\begin{abstract}
A sixer is a configuration of six pairwise skew lines on a smooth cubic surface, equivalently a choice of six exceptional curves defining a blow-down to $\PP^2$. We study the $72$ sixers on the Fermat cubic surface $x_0^3+x_1^3+x_2^3+x_3^3=0$.

We show that these sixers split into two orbits under the automorphism group of the Fermat cubic, of sizes $18$ and $54$. We give a geometric interpretation of this decomposition through the corresponding plane blow-up models: representatives of the two orbit types determine six-point configurations in $\PP^2$ whose projective automorphism groups have orders $36$ and $12$, respectively. These groups identify with the stabilizers of the corresponding sixers and recover the two orbit sizes.

We then compute the projective groups associated with representatives of the two orbits over $K=\mathbb Q(\omega)$, where $\omega^2+\omega+1=0$, and distinguish them arithmetically by the determinant square-class character $\delta_K:\PGL_2(K)\longrightarrow K^*/(K^*)^2$. Its images have $\mathbb F_2$-dimensions $1$ and $2$ for the orbits of sizes $18$ and $54$, respectively. Modulo $13$, the corresponding finite images are $\PSL_2(\mathbb F_{13})$ and $\PGL_2(\mathbb F_{13})$, respectively, and the determinant-character distinction persists for all choices of
normalization triple.
\end{abstract}
\maketitle

\section{Introduction}
Configurations of lines on algebraic surfaces are a classical source of incidence geometry.  Smooth cubic surfaces provide the fundamental example: over an algebraically closed field, every smooth cubic surface contains exactly $27$ lines.  The configuration of these lines is classical, going back to Schl\"afli~\cite{Schlaefli} and Klein~\cite{Klein1873}, and remains a central example in the geometry of rational surfaces; see, for instance, \cite{Dolgachev,Manin}.

A sixer is a set of six pairwise skew lines on a smooth cubic surface. Such a set determines a contraction of the surface to  $\mathbb P^2$, and hence a presentation of the cubic surface as the blow-up of six points in the plane.
From this point of view, different sixers correspond to different plane models
of the same cubic surface; see, for example, \cite[Chapter~9]{Dolgachev} or \cite[Chapter~V, Section~4]{Hartshorne}.

In \cite[Definition 2.1.2]{politus3}, a group $\Gamma_{\mathcal L} $ was associated with finite configurations of pairwise skew lines $\mathcal L$ in $\mathbb P^3$.  The construction is intrinsic to the incidence geometry of the configuration and provides a new way to attach algebraic data to finite sets of skew lines.  It is studied in  \cite{politus3,politus7,Favacchio2025}, with particular emphasis on the case where $\Gamma_{\mathcal L}$ is finite. A matrix model for this group was developed in \cite{Favacchio2025}.  After choosing three distinguished lines in a sixer, the remaining lines can be represented by $2\times 2$ matrices.  In this normalized presentation, the associated group is generated in $\PGL_2$ by the projective classes of the pairwise differences of these matrices; see \cite[Corollary~2.7]{Favacchio2025}.  We recall this construction in Section~\ref{sec:sixer-group}.

The purpose of the present paper is to study the geometry of the sixers on the Fermat cubic surface and the arithmetic of their associated groups. The guiding question is whether the associated group detects geometric information about the chosen configuration of lines.

The work is experimental in the following sense. Starting from an explicit line model of the Fermat cubic, we construct the skewness graph of the $27$ lines, enumerate its cliques of size $6$, and compute the induced action of the automorphism group on the resulting sixers. We then study representatives of the two orbits in two complementary ways. On the geometric side, we construct the corresponding plane blow-up models and compute the projective automorphism groups of the resulting six-point configurations. On the arithmetic side, we compute the associated projective groups and extract determinant square-class and finite-reduction data. Thus the computations are not only a verification step, but the mechanism by which the geometric and arithmetic structures compared in this paper are produced.

We focus on the Fermat cubic surface
\[
        S_F=\{x_0^3+x_1^3+x_2^3+x_3^3=0\}\subset \mathbb P^3.
\]
Its automorphism group is well known; see, for example, \cite{DolgachevDuncan}.  It is   $\Aut(S_F)\cong \mu_3^3\rtimes S_4,$ where $\mu_3$ denotes the group of third roots of unity.  The factor $S_4$ permutes the four coordinates, while the diagonal subgroup $\mu_3^3$ acts by multiplying the coordinates by third roots of unity, modulo the scalar
diagonal. The automorphism group acts on the $72$ sixers, splitting them into two orbits of sizes $18$ and $54$. We give a plane-geometric interpretation of this decomposition. Contracting representatives of the two orbit types produces explicit six-point configurations $Z_{18},Z_{54}\subset\PP^2$. Their projective automorphism groups have orders $36$ and $12$, respectively, and identify with the stabilizers of the corresponding sixers. The orbit sizes $18$ and $54$ then follow directly from the orbit--stabilizer theorem.

We then compute the projective groups associated with representatives of the two orbits over $K=\mathbb Q(\omega)$. The two orbit types exhibit different arithmetic behavior: their associated projective groups are distinguished by the image of the determinant square-class character over $K$. Modulo $13$, the corresponding finite images are $\PSL_2(\mathbb F_{13})$ and $\PGL_2(\mathbb F_{13})$, respectively, and we verify that the determinant-character distinction persists for all choices of normalization.

The paper is organized as follows. Section~\ref{sec:sixer-group} recalls the matrix model for the group associated with a sixer. Section~\ref{sec:fermat-computation} describes the $27$ lines on the Fermat cubic, enumerates its $72$ sixers, and determines their two automorphism orbits. Section~\ref{sec:blowup-interpretation} gives a plane-geometric interpretation of this orbit decomposition. Section~\ref{sec:fermat-associated-groups} computes the associated groups for representatives of the two orbits. Section~\ref{sec:determinant-square-classes} studies their determinant square-class images, and Section~\ref{sec:distinguishing-fermat-orbits} compares this arithmetic distinction with the finite reductions of the two associated groups. Finally, Section~\ref{sec:discussion} discusses the results and some open questions.

All computations are finite and reproducible. The relevant computational output is reported and discussed in the appendices, while the scripts used to produce it are available in the accompanying GitHub repository \cite{FermatSixersCode}.

\section{The group associated with a sixer}\label{sec:sixer-group}

In this section we recall the construction that associates a subgroup of $\PGL_2(K)$ to a configuration of pairwise skew lines in $\PP^3$.  We then specialize it to sixers on smooth cubic surfaces.

\subsection{Skew lines in graph form and their group}

A matrix model for this group was developed in \cite{Favacchio2025}. We first recall the general construction. See \cite[Lecture~6]{HarrisAG} for the standard affine charts on the Grassmannian defined by complementary subspaces.

Let $K$ be a field and let
\[
\mathcal L={L_0,L_\infty,L_1,\ldots,L_m}
\]
be a finite collection of pairwise skew lines in $\PP^3_K$. We choose two distinguished lines $L_0$ and $L_\infty$. After a projective change of coordinates, we may write
\[
K^4=U\oplus W,
\qquad
\text{with}
\qquad
L_0=\PP(U),\qquad L_\infty=\PP(W),
\]
where $U$ and $W$ are two-dimensional $K$-vector spaces.

Every line $L\subset \PP(U\oplus W)$ which is skew to both $L_0$ and
$L_\infty$ is the graph of an isomorphism
\[
M:U\longrightarrow W.
\]
Indeed, the two projections from the corresponding two-dimensional subspace of $U\oplus W$ to $U$ and to $W$ are both isomorphisms. Therefore, after the choice of $L_0$ and $L_\infty$, the remaining lines of
$\mathcal L$ are represented by matrices
\[
M_1,\ldots,M_m\in \GL_2(K).
\]
The incidence condition has a simple matrix form. If $L_{M_i}$ and $L_{M_j}$ are the graph lines associated with $M_i$ and $M_j$, then
\[
L_{M_i}\cap L_{M_j}=\varnothing
\quad\Longleftrightarrow\quad
\det(M_i-M_j)\neq 0.
\]
Indeed, see also \cite[Lemma~2.2]{Favacchio2025}, an intersection point would give a nonzero vector $u\in U$ such that
\[
M_i u=M_j u,
\quad \Longleftrightarrow\quad
(M_i-M_j)u=0.
\]
Thus, for a configuration of pairwise skew lines, all differences
$M_i-M_j,\quad i\neq j,$ 
are invertible.

We now specialize this construction to a sixer. Choose an ordered triple
\[
(L_0,L_\infty,L_1)
\]
of pairwise skew lines. The line $L_0$ is represented by the zero matrix, while $L_\infty$ plays the role of the line at infinity. After normalizing by the matrix attached to $L_1$, the line $L_1$ is represented by $I_2$, and the sixer is encoded by
\[
0,\quad \infty,\quad I_2,\quad M_2,\quad M_3,\quad M_4.
\]

Set 
$M_0=0,\qquad M_1=I_2.$
By \cite[Corollary~2.7]{Favacchio2025}, the associated group is
\[
\Gamma_{\mathcal L}=
\left\langle
[M_i-M_j]\;:\;0\leq i<j\leq 4
\right\rangle
\subseteq \PGL_2(K).
\]
Thus $\Gamma_{\mathcal L}$ is generated by the projective classes of the ten pairwise differences among the five finite graph matrices $0,\  I_2,\  M_2,\  M_3,\  M_4.$

This matrix presentation depends on the choice of the distinguished pair $(L_0,L_\infty)$ and on the coordinates used to identify $K^4$ with $U\oplus W$. In the computations below, these choices are always fixed explicitly. Thus the group is understood through a normalized matrix presentation of the given configuration of skew lines.

\begin{example}[Lines on a smooth quadric]
Let $\mathcal Q\subset \PP^3$ be a smooth quadric surface. Then $\mathcal Q \simeq \PP^1\times \PP^1$, and the lines on $\mathcal Q$ belong to two rulings.
Two lines on $\mathcal Q$ are skew if and only if they belong to the same ruling.
Hence any finite configuration of pairwise skew lines on $\mathcal Q$ is contained in one ruling. 
Choose two of these lines as $L_0$ and $L_\infty$. After identifying $\PP^3=\PP(U\oplus W)$ with  $L_0=\PP(U),\  L_\infty=\PP(W),$  the remaining lines in the same ruling are graphs of scalar maps 
$\lambda I_2:U\longrightarrow W.$ 
Therefore every difference  $\lambda I_2-\mu I_2=(\lambda-\mu)I_2$  has trivial projective class in $\PGL_2(K)$. Consequently, $\Gamma_{\mathcal L}=\{1\}.$

Thus smooth quadrics yield only the trivial group. Smooth cubic surfaces are therefore the first natural class of surfaces in $\PP^3$ for which the associated group can exhibit nontrivial behavior.
\end{example}

\subsection{Sixers and blow-down models}

Let $S\subset \PP^3$ be a smooth cubic surface over an algebraically closed field.  A sixer on $S$ is a set
\[
        \mathcal L=\{L_1,\ldots,L_6\}
\]
of six pairwise skew lines on $S$.  The six lines can be contracted simultaneously. The contraction gives a birational morphism
\[
        S\longrightarrow \PP^2,
\]
and realizes $S$ as the blow-up of the plane at six points in general position.

Conversely, every presentation of $S$ as the blow-up of $\PP^2$ at six points in general position determines a sixer, namely the six exceptional curves.
Thus sixers are equivalent to blow-down models of $S$.

Classically, sixers occur as the two halves of Schl\"afli  double-sixes, \cite{Schlaefli}; in particular, the $36$ double-sixes give the $72$ sixers on a smooth cubic surface.  We will not use the double-six formalism below, but only the equivalent interpretation of sixers as blow-down models. Since every smooth cubic surface has $72$ sixers, each sixer determines a presentation of the surface as the blow-up of $\PP^2$ at six points.

For a fixed surface $S$, the automorphism group $\Aut(S)$ acts on the set of sixers.  The orbit decomposition measures the different symmetry types of blow-down models.  In this paper we study this action for the Fermat cubic surface.  We will see that its $72$ sixers split into two automorphism orbits, of sizes $72=54+18.$

The guiding question is whether the corresponding matrix groups $\Gamma_{\mathcal L}$ reflect the geometry of the associated blow-up models.

\subsection{The determinant square-class character}
\label{ss:determinant square-class}

Throughout the general construction, $K$ denotes a field.  In the Fermat computations the normalized matrices are defined over
\[
        K=\mathbb Q(\omega),\qquad \omega^2+\omega+1=0.
\]
The invariant considered below is attached to the projective representation over this field.  After extending scalars to $\mathbb C$, it becomes trivial, since $\mathbb C^*/(\mathbb C^*)^2=1.$
Thus the arithmetic information studied here belongs to the representation over its field of definition, rather than merely to the complex configuration obtained after scalar extension.

There is a natural homomorphism
\[
        \delta_K:\PGL_2(K)\longrightarrow
        K^*/(K^*)^2.
\]
If $g\in\PGL_2(K)$ and $\widetilde g\in\GL_2(K)$ is any lift, define
\[
        \delta_K(g)
        =
        [\det(\widetilde g)].
\]
This is well defined: replacing $\widetilde g$ by $\lambda\widetilde g$, with $\lambda\in K^*$, multiplies the determinant by $\lambda^2$, which does not change its square class.  Moreover, $\delta_K$ is a group homomorphism because the determinant is multiplicative. 

For a group $\Gamma$, its first homology with coefficients in
$\mathbb F_2$ is the mod-$2$ abelianization
\[
        H_1(\Gamma;\mathbb F_2)
        \cong
        \Gamma^{\mathrm{ab}}\otimes_{\mathbb Z}\mathbb F_2,
\]
where 
        $\Gamma^{\mathrm{ab}}
        =
        \Gamma/[\Gamma,\Gamma]$ 
and $[\Gamma,\Gamma]$ is the commutator subgroup. Equivalently,
\[
        H_1(\Gamma;\mathbb F_2)
        \cong
        \Gamma\big/\bigl([\Gamma,\Gamma]\Gamma^2\bigr),
\]
where $\Gamma^2$ denotes the subgroup generated by the squares $g^2$, $g\in\Gamma$.  Thus $H_1(\Gamma;\mathbb F_2)$ is an $\mathbb F_2$-vector space, and its dual is naturally identified with the space of homomorphisms
        $\Hom(\Gamma,\mathbb Z/2).$
In particular,
        $\dim_{\mathbb F_2}H_1(\Gamma;\mathbb F_2)$
measures the number of independent mod-$2$ characters of $\Gamma$.

Since $K^*/(K^*)^2$ is an $\mathbb F_2$-vector space, the
restriction of $\delta_K$ to a subgroup
$\Gamma\subset\PGL_2(K)$ factors through
       $ H_1(\Gamma;\mathbb F_2).$ 
Consequently,
\[
        \dim_{\mathbb F_2}H_1(\Gamma;\mathbb F_2)
        \ge
        \dim_{\mathbb F_2}\delta_K(\Gamma).
\]

In the Fermat computations below, this character is the main invariant used to compare the two sixer orbits.  Finite reductions modulo primes are then used as a computable shadow of the determinant square-class calculation.

\section{The Fermat cubic and its sixers}
\label{sec:fermat-computation}

Let  $S_F=\{x_0^3+x_1^3+x_2^3+x_3^3=0\}\subset\PP^3.$ 
We work over
\[
        K=\mathbb Q(\omega),\qquad \omega^2+\omega+1=0,
\]
so that the third roots of unity are   $1,\  \omega,\  \omega^2.$

The $27$ lines on the Fermat cubic admit a particularly simple explicit description.  Partition the four coordinates into two unordered pairs.  There are three such partitions:
\[
\pi_0=\{\{0,1\},\{2,3\}\},\qquad
\pi_1=\{\{0,2\},\{1,3\}\},\qquad
\pi_2=\{\{0,3\},\{1,2\}\}.
\]
For each of the three partitions, we use the ordering of the two pairs displayed above.  Thus, if
\[
        \pi=\{\{i,j\},\{k,\ell\}\},
\]
with the pairs written in this order, we define
\[
        L_{\pi;a,b}:
        \qquad
        x_i+a x_j=0,\qquad x_k+b x_{\ell}=0,
\]
for   $a,b\in\{1,\omega,\omega^2\}.$ 
Since \(a^3=b^3=1\), these equations imply
\[
        x_i^3+x_j^3=0,
        \qquad
        x_k^3+x_{\ell}^3=0,
\]
so that \(L_{\pi;a,b}\subset S_F\).
This gives 
       $ 3\cdot 3\cdot 3=27$ 
lines.  We label these lines by 
        $L_{\pi;a,b}.$ 

For example, for the partition 
        $\pi_0=\{\{0,1\},\{2,3\}\}$ 
and the choice $a=\omega^2$, $b=\omega$, the corresponding line is
\[
        L_{\pi_0;\omega^2,\omega}:
        \qquad
        x_0+\omega^2 x_1=0,\qquad x_2+\omega x_3=0.
\]
Let $\mathcal G_F$ be the skewness graph of the $27$ lines on $S_F$: its vertices are the lines, and two vertices are joined when the corresponding lines are skew.  A sixer on $S_F$ is exactly a clique of size $6$ in~$\mathcal G_F$.

The Python script in Appendix~\ref{app:fermat-computation} constructs the $27$ Fermat lines, records their numerical ordering in the notation $L_{\pi;a,b}$, builds the skewness graph $\mathcal G_F$, and enumerates its $6$-cliques.  Its output gives
\[
        |V(\mathcal G_F)|=27,\qquad |E(\mathcal G_F)|=216,
        \qquad \#\{\text{sixers}\}=72.
\]
Thus the computation recovers the classical number of sixers on a smooth cubic surface.

The automorphism group of the Fermat cubic is well known; see, for example,
\cite{Dolgachev,DolgachevDuncan}.  It is
\[
        \Aut(S_F)\cong \mu_3^3\rtimes S_4,
\]
where $S_4$ permutes the four coordinates, and $\mu_3^3$ is the diagonal subgroup acting by coordinatewise multiplication by third roots of unity, modulo the scalar diagonal.

The script realizes this action explicitly on the set of $27$ lines, and hence on the skewness graph $\mathcal G_F$ and on its $72$ sixers. For background on computational methods for finite and permutation groups, see, for instance, \cite{HoltEickOBrien}. The resulting finite permutation-group computation gives two automorphism orbits:
\[
        72=54+18.
\]
The complete lists of the two orbits are printed in Appendix~\ref{app:fermat-computation}.  With respect to the numerical ordering of the Fermat lines used there, representatives are
\[
        \mathcal L_{54}=\{0,4,10,12,20,25\},
        \qquad
        \mathcal L_{18}=\{0,4,8,10,14,15\}.
\]
Here the subscripts indicate the sizes of the corresponding automorphism
orbits.

Equivalently, in the notation $L_{\pi;a,b}$, these representatives are
\[
\begin{aligned}
\mathcal L_{54}=\{&
L_{\pi_0;1,1},\
L_{\pi_0;\omega,\omega},\
L_{\pi_1;1,\omega},\
L_{\pi_1;\omega,1},\
L_{\pi_2;1,\omega^2},\
L_{\pi_2;\omega^2,\omega}
\},
\end{aligned}
\]
and
\[
\begin{aligned}
\mathcal L_{18}=\{&
L_{\pi_0;1,1},\
L_{\pi_0;\omega,\omega},\
L_{\pi_0;\omega^2,\omega^2},\
L_{\pi_1;1,\omega},\
L_{\pi_1;\omega,\omega^2},\
L_{\pi_1;\omega^2,1}
\}.
\end{aligned}
\]
In the next section we give a geometric interpretation of these two orbit types through their corresponding plane blow-up models.
In Section~\ref{sec:fermat-associated-groups} we normalize these two representatives and compute the associated matrix groups.  

%---------------------------------------------------------------------
%---------------------------------------------------------------------

\section{The plane blow-up model and the geometry of the sixer orbits}\label{sec:blowup-interpretation}
We now give a geometric interpretation of the two $\operatorname{Aut}(S_F)$-orbits of sixers by means of plane blow-up models.
This will also identify their stabilizers with projective automorphism groups
of suitable configurations of six points.

\subsection{An explicit blow-up model}

Let
\begin{align*}
    p_1&=(1:0:0),\quad p_2=(0:1:0),\quad p_3=(0:0:1), \\
    p_4&=(1:1:1),\quad p_5=(1:\omega:\omega^2),\  p_6=(1:\omega^2:\omega)
\end{align*}
be points in $\PP^2_K$, where $\omega^2+\omega+1=0$. These points are in general position. Let
\[\pi\colon X=\Bl_{p_1,\ldots,p_6} \PP^2 \longrightarrow \PP^2\]
be their blow-up, let $H$ be the pull-back of the class of a line, and let $E_1,\ldots,E_6$ be the exceptional curves. The anticanonical class of $X$ is \(-K_X=3H-E_1-\cdots-E_6.\) The complete linear system $|-K_X|$ defines an embedding \( X\hookrightarrow \PP^3\). With a suitable choice of basis of the cubic forms vanishing at $p_1,\ldots,p_6$, its image is the cubic surface
\[S\colon y^2z-yz^2-x^2w+xw^2 =0.\]
The surface $S$ is projectively equivalent to the Fermat cubic. Indeed, set
\[
x_0= -\omega y-\omega^2 z, \qquad
x_1= y+\omega^2 z, \qquad
x_2= \omega x+\omega^2 w, \qquad
x_3= -x-\omega^2 w.
\]
The corresponding linear transformation is invertible and
\[x_0^3+x_1^3+x_2^3+x_3^3 = -3(2\omega+1) \bigl(y^2z-yz^2-x^2w+xw^2\bigr).\]
It therefore identifies $S$ with
\[S_F=\{x_0^3+x_1^3+x_2^3+x_3^3=0\}.\]
We use this identification to translate the standard blow-up model of the $27$ lines on $X$ into the Fermat-line notation introduced in Section~\ref{sec:fermat-computation}.

\subsection{The twenty-seven lines in the blow-up model}

The $27$ lines on $X$ are represented by the following divisor classes:
\begin{align*}
E_i,  & &\qquad 1 \leq i \leq 6, \\
L_{ij} &= H - E_i - E_j, &\qquad 1 \leq i < j \leq 6, \\
Q_i    &= 2H - \sum_{j \neq i} E_j, &\qquad 1 \leq i \leq 6.
\end{align*}
Geometrically, the curves $E_i$ are the exceptional divisors, the curves $L_{ij}$ are the strict transforms of the lines through $p_i$ and $p_j$, and the curves $Q_i$ are the strict transforms of the conics through the five points different from $p_i$. Each of these curves satisfies
\[C^2=-1,\qquad (-K_X)\cdot C=1,\]
and is therefore mapped to a line by the anticanonical embedding. Thus the $27$ lines consist of
\[ 6\text{ curves }E_i,\qquad 15 \text{ curves }L_{ij}, \qquad 6\text{ curves }Q_i. \]
Using the projective equivalence above, we computed the correspondence between these curves and the Fermat lines $L_{\pi;a,b}$ introduced in Section \ref{sec:fermat-computation}; see Appendix~\ref{app:singular-blowup}. In particular, the representative \(\mathcal L_{18}= \{0,4,8,10,14,15\}\)
corresponds to
\[\mathcal L_{18}= \{E_3,Q_3,L_{12},L_{24},L_{25},L_{26}\},\]
whereas \(\mathcal L_{54}= \{0,4,10,12,20,25\}\)
corresponds to
\[\mathcal L_{54}= \{L_{12},L_{16},L_{26},Q_3,Q_4,Q_5\}.\]
The first representative is of type $E+Q+4L$, while the second is of type $3L+3Q$.

\subsection{Sixers in the fixed blow-up model}

The intersection relations among the $27$ curves are determined by
\[H^2=1, \qquad  H\cdot E_i=0, \qquad  E_i\cdot E_j =-\delta_{ij}.\]
In particular,
\[E_i \cdot Q_j=
	\begin{cases}
	0,&i=j,\\
	1,&i\neq j,
	\end{cases}
\qquad
E_i\cdot L_{jk}=
	\begin{cases}
	1,&i\in\{j,k\},\\
	0,&i\notin\{j,k\},
	\end{cases}
\]
and, for distinct curves,
\[L_{ij} \cdot L_{k\ell} =0 \quad\Longleftrightarrow\quad |\{i,j\} \cap \{k,\ell\}|=1,\]
while
\[L_{ij} \cdot Q_k=0   \quad\Longleftrightarrow\quad k \notin\{i,j\}.\]
These relations give the following description of all sixers in the fixed blow-up model.

\begin{proposition} For $I\subset\{1,\ldots,6\}$, set   $I^c=\{1,\ldots,6\}\setminus I$. 
The $72$ sixers on $X$ are of the following five types:
\begin{align*}
\mathcal E &=\{E_1,\ldots,E_6\},\\
\mathcal Q &=\{Q_1,\ldots,Q_6\},\\
\mathcal S_I^{E} &=\{E_i:i\in I\}\cup \{L_{ab}:a,b\in I^c,\ a<b\}, && |I|=3,\\
\mathcal S_I^{Q} &=\{L_{ab}:a,b\in I,\ a<b\}\cup \{Q_j:j\in I^c\}, && |I|=3,\\
\mathcal S_{i,k} &=\{E_i,Q_i\}\cup \{L_{kj}:j\notin\{i,k\}\}, && i\neq k.
\end{align*}
\end{proposition}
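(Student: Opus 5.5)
The plan is to classify a sixer $\mathcal L$ on $X$ by its numbers of curves of each kind. Write
\[
e=\#(\mathcal L\cap\{E_i\}),\qquad q=\#(\mathcal L\cap\{Q_i\}),\qquad \ell=\#(\mathcal L\cap\{L_{ij}\}),
\]
so that $e+q+\ell=6$. I will use only the intersection rules recalled just before the statement.

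\textbf{Constraints on the $L$-part.} Let $F$ be the set of pairs $\{a,b\}$ with $L_{ab}\in\mathcal L$. Pairwise skewness says that distinct pairs in $F$ meet in exactly one index, so $F$ is an intersecting family of $2$-subsets of $\{1,\ldots,6\}$. Such a family is either a star $\{\{k,j\}\}$ with at most $5$ members, or the triangle of pairs inside a $3$-set $T$.

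\textbf{Constraints between kinds.} The recalled rules give three compatibility conditions.
\begin{itemize}
\item An $E_i$ and a $Q_j$ in $\mathcal L$ force $i=j$. Hence $e,q\le 1$ whenever both are positive.
\item $E_i$ is skew to $L_{ab}$ if and only if $i\notin\{a,b\}$.
\item $Q_k$ is skew to $L_{ab}$ if and only if $k\in\{a,b\}$.
\end{itemize}

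\textbf{Case analysis.}
\begin{enumerate}
\item If $\ell=0$, then either $e=6$ or $q=6$, which gives $\mathcal E$ and $\mathcal Q$, or else $e+q\le 2$, which is too few. So these are the only possibilities.
\item If $e,q\ge 1$, then $e=q=1$ with the common index $i$, and $\ell=4$. Four pairs cannot form a triangle, so they form a star at some $k$. They must avoid $i$ (because of $E_i$) and must contain $i$ (because of $Q_i$) unless $k$ handles it; in fact $Q_i$ requires $i\in\{a,b\}$ for every pair $\{a,b\}\in F$. I must check these compatibilities carefully, since $E_i$ requires $i\notin\{a,b\}$ and $Q_i$ requires $i\in\{a,b\}$, and both cannot hold at once. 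Rechecking, $Q_i\cdot L_{ab}=1$ when $i\notin\{a,b\}$ in the paper's convention, so the correct reading is that $Q_k$ is skew to $L_{ab}$ iff $k\notin\{a,b\}$. With that reading, the four pairs avoid $i$ and form a star at some $k\ne i$ on the remaining four indices. This gives exactly $\mathcal S_{i,k}$.
\item If $q=0$ and $e\ge1$, then $\ell=6-e$, and every pair in $F$ avoids the index set $I$ of the chosen $E$'s. So $F$ is an intersecting family inside $I^c$ of size $6-e$. A star inside $I^c$ has at most $|I^c|-1=5-e$ members, which is too few. A triangle needs $6-e=3$ and $|I^c|\ge3$, so $e=3$ and $F$ consists of the pairs in $I^c$. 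This is $\mathcal S^E_I$.
\item If $e=0$ and $q\ge1$, the same argument applies, giving $\mathcal S^Q_I$ with the $L$'s inside $I$ and $Q_j$ for $j\in I^c$.
\item If $e=q=0$, then $\ell=6$, which is impossible because an intersecting family has at most $5$ members.
\end{enumerate}

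\textbf{Completeness and count.} By direct check from the intersection rules, each listed set is pairwise skew. The counts are
\[
1+1+\binom63+\binom63+6\cdot5=72,
\]
which matches the known total of $72$ sixers, so the list is complete and the families are disjoint.

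The main obstacle is the bookkeeping of the $E$/$Q$/$L$ compatibility rules (the sign/convention issue in case~2), and the case $e=q=1$ in particular.
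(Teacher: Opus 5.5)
Your proposal is correct, but it argues in the opposite direction from the paper. The paper only verifies that each displayed set consists of six pairwise disjoint $(-1)$-curves and counts $1+1+20+20+30=72$. It then appeals to the classical fact that a smooth cubic surface has exactly $72$ sixers, tacitly using that the displayed sets are pairwise distinct.

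You instead classify directly. The $L$-pairs of a sixer form an intersecting family of $2$-subsets, hence a star or a triangle. The $E$/$Q$ compatibility conditions then force one of the shapes $(e,q,\ell)\in\{(6,0,0),(0,6,0),(3,0,3),(0,3,3),(1,1,4)\}$. This costs a case analysis, but it explains why only these types occur and does not depend on the classical count. The listed sets are visibly distinct: they have different $(e,q,\ell)$, and $I$, $i$, $k$ can be read off from the set. So your argument actually derives the number $72$, and your final appeal to the known total is unnecessary. The paper's route is much shorter but rests entirely on that external count.

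One thing to clean up: your third bullet is stated backwards, and the sentence in case~2 beginning with \emph{Rechecking} has a false premise. In fact $Q_k\cdot L_{ab}=2-|\{a,b\}\setminus\{k\}|$. This is $0$ exactly when $k\notin\{a,b\}$, as recalled in the paper, and $1$ when $k\in\{a,b\}$. State this correct rule once at the outset. Then $Q$-indices, like $E$-indices, must be avoided by every $L$-pair. That is precisely what makes case~4 the same argument as case~3, and it removes the apparent conflict in case~2.
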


\begin{proof}
The intersection relations above show that every displayed set consists of six pairwise disjoint $(-1)$-curves. The numbers of such sets are respectively \(1, 1,  \binom{6}{3}, \binom{6}{3}, 6\cdot5.\) Their total number is \(1+1+20+20+30=72.\) Since a smooth cubic surface has exactly $72$ sixers, the list is complete.
\end{proof}

\subsection{Two plane blow-down models}

The exceptional sixer \(\mathcal E=\{E_1,\ldots,E_6\}\) belongs to the orbit of size \(18\). Its contraction is the original blow-down \(\pi \colon X\longrightarrow \PP^2,\)
and the corresponding configuration of points is
\[Z_{18}=\left\{
\begin{aligned}
&(1:0:0),\ (0:1:0),\ (0:0:1),\ (1:1:1),\\
&(1:\omega:\omega^2),\ (1:\omega^2:\omega)
\end{aligned}
\right\}.
\]
For the orbit of size \(54\), we use the sixer \(\mathcal S_{54}=\{L_{12},L_{16},L_{26},Q_3,Q_4,Q_5\}.\) The pull-back of the class of a line under the corresponding blow-down is \(D_{54}=4H-2E_1-2E_2-E_3-E_4-E_5-2E_6.\)
Indeed,
\[D_{54}^2=1, \qquad  (-K_X)\cdot D_{54}=3,
\]
and
\[
D_{54}\cdot C=0 \qquad \text{for every } C\in\mathcal S_{54}.
\]
The linear system \(|D_{54}|\) is represented in the original plane by the quartics having double points at \(p_1,p_2,p_6\) and passing through \(p_3,p_4,p_5\). The Singular computation described in Appendix~\ref{app:singular-blowup} verifies that this system defines a morphism \(\beta_{54}\colon X\longrightarrow\PP^2\) which contracts precisely the six curves in \(\mathcal S_{54}\).
The six image points obtained by this contraction are, after reordering the first three points and applying a diagonal projective change of coordinates, projectively equivalent to
\[
Z_{54}=\left\{
\begin{aligned}
&(1:0:0),\ (0:1:0),\ (0:0:1),\ (1:1:1),\\
&(\omega+2:1-\omega:3),\ (1-\omega:\omega+2:3)
\end{aligned}
\right\}.
\]
Thus \(Z_{18}\) and \(Z_{54}\) give two explicit plane blow-up models of the Fermat cubic surface.

\subsection{Verification of the second blow-up model}

Let \(f_{18}=y^2z-yz^2-x^2w+xw^2\) be the cubic equation obtained from \(Z_{18}\). Blowing up the six points of \(Z_{54}\) and applying the anticanonical map gives the cubic surface \(S_{54}=\{f_{54}=0\}\subset\PP^3,\)
where
\begin{equation*}
f_{54}=
3x^2y-3\omega xy^2+3x^2z-(3\omega+3)y^2z-\omega xz^2-(\omega+1)yz^2
-3\omega x^2w-2yzw+xw^2.
\end{equation*}
Consider the matrix
\[M=
\begin{pmatrix}
-\omega&0&\omega+1&0\\
-1&\omega&-\omega&1\\
-2&-\omega&-\omega&1\\
4\omega+4&1&-2&-2\omega-2
\end{pmatrix}.
\]
Its determinant is \(\det(M)=-3(\omega+1)\neq0,\) and a direct computation gives
\[
f_{54}\bigl(M(x,y,z,w)^{\mathsf T}\bigr)=-3(\omega+1)f_{18}(x,y,z,w).
\]
Consequently, \(M\) induces a projective equivalence between the two
anticanonical models.

Let
\[\tau_M\colon \PP^3\longrightarrow\PP^3,  \qquad [X]\longmapsto[MX]\]
be the corresponding projective transformation, and let \(E'_1,\ldots,E'_6\) denote the exceptional curves in the blow-up of \(Z_{54}\). The Singular computation described in Appendix~\ref{app:singular-blowup} gives
\[
\begin{aligned}
\tau_M^{-1}(E'_1)&=L_{12},&
\tau_M^{-1}(E'_2)&=L_{16},&
\tau_M^{-1}(E'_3)&=L_{26},\\
\tau_M^{-1}(E'_4)&=Q_3,&
\tau_M^{-1}(E'_5)&=Q_4,&
\tau_M^{-1}(E'_6)&=Q_5.
\end{aligned}
\]
This verifies that the second plane model is obtained by contracting exactly the chosen sixer~\(\mathcal S_{54}\).

\subsection{Plane automorphisms and the two orbit sizes}

For a finite configuration \(Z\subset\PP^2\), set
\[
\Aut_{\PP^2}(Z)=\{g\in\PGL_3(K):g(Z)=Z\}.
\]
\begin{proposition}\label{prop:Stab=Aut}
Let \(\mathcal L\) be a sixer on \(S_F\), and let \(Z_{\mathcal L}\subset\PP^2\) be the six-point configuration obtained by contracting \(\mathcal L\). Then
\[
\operatorname{Stab}_{\Aut(S_F)}(\mathcal L) \cong \Aut_{\PP^2}(Z_{\mathcal L}).
\]
\end{proposition}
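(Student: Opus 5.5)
The plan is to construct two mutually inverse group homomorphisms, both induced by the contraction $\beta_{\mathcal L}\colon S_F\to\PP^2$ of the six lines in $\mathcal L$. We use that $\beta_{\mathcal L}$ is the morphism defined by the complete linear system $|D_{\mathcal L}|$. Here $D_{\mathcal L}$ is the unique class with $D_{\mathcal L}^2=1$ and $D_{\mathcal L}\cdot C=0$ for all $C\in\mathcal L$; in terms of the contracted curves $E'_1,\dots,E'_6$ it is $D_{\mathcal L}=\tfrac13\bigl(-K_{S_F}+\sum_{C\in\mathcal L}C\bigr)$. Since $-K_{S_F}=3D_{\mathcal L}-\sum E'_i$, this class is determined by $\mathcal L$ together with the anticanonical class.

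\emph{From stabilizer to plane automorphisms.} Let $g\in\operatorname{Stab}_{\Aut(S_F)}(\mathcal L)$. Every automorphism of $S_F$ preserves $K_{S_F}$, and $g$ permutes the curves of $\mathcal L$, so $g^*D_{\mathcal L}=D_{\mathcal L}$. Hence $g$ acts linearly on $H^0(S_F,D_{\mathcal L})\cong K^3$, well defined up to scalar. This gives an element $\bar g\in\PGL_3(K)$ with $\beta_{\mathcal L}\circ g=\bar g\circ\beta_{\mathcal L}$. Because $g$ permutes the contracted curves, $\bar g$ permutes their images, so $\bar g(Z_{\mathcal L})=Z_{\mathcal L}$. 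The assignment $g\mapsto\bar g$ is a homomorphism. It is injective: $\beta_{\mathcal L}$ is birational, so if $\bar g=1$ then $g$ is the identity on a dense open set, hence $g=1$.

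\emph{From plane automorphisms to stabilizer.} Let $h\in\Aut_{\PP^2}(Z_{\mathcal L})$. By the universal property of blowing up, $h$ lifts to an automorphism $\tilde h$ of $\Bl_{Z_{\mathcal L}}\PP^2\cong S_F$ that permutes the exceptional curves, i.e.\ permutes $\mathcal L$. Now $\tilde h$ preserves $-K=3H-\sum E'_i$, so it acts linearly on $H^0(-K)$ and therefore extends to a projective transformation of the anticanonical model. Thus $\tilde h\in\Aut(S_F)$; recall that automorphisms of a smooth cubic surface are exactly the restrictions of the projective transformations preserving it, and this is the $\mu_3^3\rtimes S_4$ of Section~\ref{sec:fermat-computation}. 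So $\tilde h\in\operatorname{Stab}(\mathcal L)$, and by construction $\overline{\tilde h}=h$. Since lifts through a birational morphism are unique, the two constructions are mutually inverse. This gives the isomorphism.

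The main obstacle is the field of definition. $\Aut_{\PP^2}(Z)$ is defined using $\PGL_3(K)$ with $K=\mathbb Q(\omega)$, whereas the stabilizer is a priori taken over $\overline K$. For the first construction we must check that $\bar g$ has a $K$-rational representative. I would argue that every element of $\Aut(S_F)$ is defined over $K$, being a permutation matrix times a diagonal matrix of cube roots of unity. Moreover, for our representatives the map $\beta_{\mathcal L}$ is defined over $K$, by the explicit quartic system $|D_{54}|$ and by $\pi$ for $Z_{18}$. So $H^0(D_{\mathcal L})$ has a $K$-basis on which $g$ acts $K$-linearly. For a general sixer in the orbit, transport by an element of $\Aut(S_F)$, which is also defined over $K$. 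Conversely, any $\overline K$-projectivity preserving $Z_{\mathcal L}$ lies in $\PGL_3(K)$: it is determined by the images of four points in general position among the $K$-rational points of $Z_{\mathcal L}$. A secondary point to verify is that the permutation of the lines determined by $g$ is what forces $g^*D_{\mathcal L}=D_{\mathcal L}$, rather than a symmetry that only preserves $\mathcal L$ as a set up to some other relabelling. This holds because $D_{\mathcal L}$ depends only on the set $\mathcal L$.
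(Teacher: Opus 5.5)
Your proposal is correct and follows the same route as the paper. A stabilizing automorphism descends through the contraction to a projectivity preserving $Z_{\mathcal L}$. Conversely, a projectivity preserving $Z_{\mathcal L}$ lifts to the blow-up and, via the anticanonical identification, to an element of $\Aut(S_F)$ preserving $\mathcal L$. You also supply what the paper leaves implicit: injectivity, that the two maps are mutually inverse, and $K$-rationality via the four-point argument.

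One trivial slip: $D^2=1$ and $D\cdot C=0$ for $C\in\mathcal L$ determine $D_{\mathcal L}$ only up to sign. Your explicit formula $D_{\mathcal L}=\tfrac13\bigl(-K_{S_F}+\sum_{C\in\mathcal L}C\bigr)$, or equivalently $-K_{S_F}\cdot D_{\mathcal L}=3$, fixes the sign.
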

\begin{proof}
An automorphism of \(S_F\) preserving \(\mathcal L\) descends through the contraction of the six curves to a projective automorphism of \(\PP^2\) preserving the six image points.

Conversely, a projective automorphism of \(\PP^2\) preserving \(Z_{\mathcal L}\) lifts to an automorphism of the blow-up \(\Bl_{Z_{\mathcal L}}\PP^2\). Since the anticanonical map identifies this blow-up with \(S_F\), the lifted automorphism induces an automorphism of \(S_F\) preserving $\mathcal L$. 
\end{proof}

The projective automorphism groups of the two point configurations were computed by testing all \(6!=720\) permutations of their points; see Appendix~\ref{app:singular-blowup}. For each permutation, the computation determines whether it is induced by an element of \(\PGL_3(K)\). The results are
\[|\Aut_{\PP^2} (Z_{18})| =36, \qquad  |\Aut_{\PP^2}(Z_{54})| =12. \]
Since
\[|\Aut(S_F)|=|\mu_3^3\rtimes S_4|=3^3\cdot24=648,\]
the orbit--stabilizer theorem gives
\[|\Aut(S_F)\cdot\mathcal E|=\frac{648}{36}=18, \quad \text{ and }
\quad |\Aut(S_F)\cdot\mathcal S_{54}|=\frac{648}{12}=54.\]
Thus the two automorphism orbits of sixers admit a direct plane-geometric interpretation: the corresponding six-point configurations have projective automorphism groups of different orders. The orbit of size \(18\) corresponds to the more symmetric configuration \(Z_{18}\), whereas the orbit of size \(54\) corresponds to \(Z_{54}\).

We summarize the geometric picture obtained in this section in the following theorem.
\begin{theorem}
The $72$ sixers on the Fermat cubic surface split into two $\Aut(S_F)$-orbits of sizes $18$ and $54$. Representatives of the two orbits admit plane blow-down models with six-point configurations $Z_{18}$ and $Z_{54}$ satisfying
\[
|\Aut_{\PP^2}(Z_{18})|=36,
\qquad
|\Aut_{\PP^2}(Z_{54})|=12.
\]
Moreover, these projective automorphism groups identify with the stabilizers of the corresponding sixers.
\end{theorem}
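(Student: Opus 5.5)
The plan is to assemble the theorem from the results of this section, and to replace the computer searches by short conceptual arguments wherever that is easy. The outline: first compute the two stabilizers and bound them against each other, then count the orbits, and finally obtain the identification with $\Aut_{\PP^2}(Z_{\mathcal L})$ from Proposition~\ref{prop:Stab=Aut}.

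\textbf{The stabilizers.} The contraction of $\mathcal E$ is the original blow-down $\pi$, with image $Z_{18}$. The contraction of $\mathcal S_{54}$ is $\beta_{54}$, and the transformation $\tau_M$ identifies its image with the blow-up of $Z_{54}$. So Proposition~\ref{prop:Stab=Aut} gives
\[
\operatorname{Stab}(\mathcal E)\cong\Aut_{\PP^2}(Z_{18}),\qquad \operatorname{Stab}(\mathcal S_{54})\cong\Aut_{\PP^2}(Z_{54}).
\]
It remains to find the orders of these groups.

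An element of $\PGL_3$ is determined by the images of four points in general position. So each permutation of the six points is realized by at most one projective map, and it suffices to decide which of the $720$ permutations are realized. For $Z_{18}$ I would also give a hand argument. The points $p_1,\dots,p_6$ are the six points of the Hesse-type configuration in which the triples $\{p_1,p_2,p_3\}$ and $\{p_4,p_5,p_6\}$ are orbits of the cyclic coordinate permutation and of the diagonal map $\mathrm{diag}(1,\omega,\omega^2)$. One checks that these two maps, the transpositions of coordinates, and a projective map swapping the two triples all preserve $Z_{18}$. Together they generate a group of order $(3\cdot3\cdot2)\cdot2=36$, namely $(\mu_3\times\mu_3)\rtimes\ldots$.

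For the upper bound, I would note that any automorphism of $Z_{18}$ must preserve the partition into the two triples. This should follow from the conic structure: $Z_{18}$ lies on no conic, and the six points split into two triangles whose sides meet in a special pattern. With the partition preserved, the group embeds into a subgroup of $S_3\wr S_2$ of order $72$, and one further constraint cuts it down to $36$.

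For $Z_{54}$ I would simply rely on the finite permutation check, giving order $12$.

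\textbf{The orbits.} Every sixer lies in one of the two orbits. By orbit--stabilizer, $|\Aut(S_F)|=648$ gives orbit sizes $648/36=18$ and $648/12=54$. Since $18+54=72$ and there are exactly $72$ sixers, the orbits of $\mathcal E$ and $\mathcal S_{54}$ together exhaust all sixers. They are distinct orbits because their stabilizers have different orders. This recovers the decomposition $72=18+54$ independently of the graph computation in Section~\ref{sec:fermat-computation}, and matches $\mathcal E$ and $\mathcal S_{54}$ with $\mathcal L_{18}$ and $\mathcal L_{54}$ via the orbit sizes.

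\textbf{Main obstacle.} The main obstacle is the faithful identification in Proposition~\ref{prop:Stab=Aut}. One must check two things:
\begin{itemize}
\item A lifted plane automorphism of $\Bl_Z\PP^2$ acts on $S_F$ by an element of $\Aut(S_F)\subset\PGL_4$. This holds because it preserves $-K_X$ and hence acts linearly on $H^0(-K_X)$.
\item The correspondence is injective in both directions. An automorphism of $S_F$ that fixes $\mathcal L$ and induces the identity on $\PP^2$ is the identity, since it agrees with the identity on a dense open set.
\end{itemize}
The remaining risk is the exact count of $|\Aut_{\PP^2}(Z_{54})|=12$, which I would accept from the finite verification in the appendix.
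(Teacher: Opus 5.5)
Your architecture matches the paper's proof. Proposition~\ref{prop:Stab=Aut} identifies the two stabilizers with $\Aut_{\PP^2}(Z_{18})$ and $\Aut_{\PP^2}(Z_{54})$, the orders $36$ and $12$ are taken as input, and orbit--stabilizer with $|\Aut(S_F)|=648$ gives the orbit sizes. Your two checks on that proposition (linearity via $H^0(-K_X)$, and injectivity by agreement on a dense open set) are correct and spell out what the paper leaves implicit. The one genuine difference is the counting step. The orbits of $\mathcal E$ and $\mathcal S_{54}$ have different sizes and $18+54=72$, so they exhaust all sixers. The decomposition therefore follows from the two stabilizer orders and the classical count alone, whereas the paper reads it off the skewness-graph enumeration of Section~\ref{sec:fermat-computation}. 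That argument is correct and slightly more economical.

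The weak point is your hand computation of $|\Aut_{\PP^2}(Z_{18})|$, which you then treat as settled: you list only the $Z_{54}$ count as remaining risk.

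The lower bound is fine once phrased precisely:
the cyclic coordinate shift is a $3$-cycle on $\{p_1,p_2,p_3\}$ and fixes $p_4,p_5,p_6$, while $\mathrm{diag}(1,\omega,\omega^2)$ does the reverse;
each coordinate transposition acts as a transposition on each triple;
the Fourier matrix $F=(\omega^{(i-1)(j-1)})_{i,j}$ sends $p_1,p_2,p_3$ to $p_4,p_5,p_6$ and $p_4,p_5,p_6$ to $p_1,p_3,p_2$, with $F^2$ projectively equal to $y\leftrightarrow z$.
These maps generate $(\mathbb Z/3)^2\rtimes\mathbb Z/4$, of order $36$.

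The upper bound, however, is not proved. ``$Z_{18}$ lies on no conic'' holds for any six points in general position, so it cannot single out a partition, and the ``further constraint'' is never identified. One way to close it:
\begin{enumerate}
\item Among the fifteen perfect matchings of $Z_{18}$, the three joining lines are concurrent exactly for the six bijections $\{p_1,p_2,p_3\}\to\{p_4,p_5,p_6\}$. All six can be checked directly; for instance $\overline{p_1p_4},\overline{p_2p_5},\overline{p_3p_6}$ meet at $(1:\omega^2:\omega^2)$.
\item For the other nine matchings, $\overline{p_2p_3}\cap\overline{p_5p_6}=(0:1:-1)\notin\overline{p_1p_4}$, and the order-$18$ subgroup generated by the first three kinds of maps is transitive on the nine matchings with a single cross pair. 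So none of them is concurrent.
\item Hence every projective automorphism preserves the resulting $K_{3,3}$, and so preserves the partition into the two triples.
\item A projective map fixing $p_1,\dots,p_4$ is the identity, so the permutation $(p_5\,p_6)$ is not realized. The triple-preserving subgroup is therefore a proper subgroup of $S_3\times S_3$, of order at most $18$, which gives $|\Aut_{\PP^2}(Z_{18})|\le 36$.
\end{enumerate}
Otherwise, simply cite the permutation check for $Z_{18}$ as well, as the paper does for both configurations.
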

\begin{proof}
The two orbit representatives and the configurations $Z_{18}$ and $Z_{54}$ are constructed above. Proposition~\ref{prop:Stab=Aut} identifies their projective automorphism groups with the corresponding stabilizers, while the Singular computations give orders $36$ and $12$. Since $|\Aut(S_F)|=648$, the orbit--stabilizer theorem gives the orbit sizes $18$ and $54$.
\end{proof}

In the following sections, we compare this geometric distinction with the associated projective matrix groups \(\Gamma_{18}\) and \(\Gamma_{54}\).

%---------------------------------------------------------------------
%---------------------------------------------------------------------

\section{The associated groups for the Fermat orbits}
\label{sec:fermat-associated-groups}
We now apply the matrix normalization described in Section~2 to the two representative sixers $\mathcal L_{54}$ and $\mathcal L_{18}$. 
The computation is performed by the Python script in Appendix~\ref{app:fermat-normalization}.  For the representative $\mathcal L_{54}=(0,4,10,12,20,25),$ we choose the ordered triple $(0,4,10)$, whereas for  $\mathcal L_{18}=(0,4,8,10,14,15)$ we choose $(0,4,8)$.

For each representative, the chosen ordered triple is sent, by a projective change of coordinates over $K$, to the standard triple
\[
\begin{aligned}
L_0 :\quad z=w=0,\qquad 
L_\infty :\quad x=y=0,\qquad 
L_1 :\quad x-z= y-w=0.
\end{aligned}
\]
Equivalently, with respect to the decomposition $K^4=U\oplus W$, these three lines are represented by $ 0,\  \infty,\  I_2.$ The remaining three lines are represented by matrices $M_2,M_3,M_4$, so that
the normalized sixer has the form
\[
        0,\quad \infty,\quad I_2,\quad M_2,\quad M_3,\quad M_4.
\]
The matrices are defined over
\[
        K=\mathbb Q(\omega),\qquad \omega^2+\omega+1=0.
\]
The corresponding subgroups of $\PGL_2(K)$ are denoted by $\Gamma_{54}$ and 
        $\Gamma_{18},$ 
according to the automorphism orbit of the original sixer.
\subsection{The orbit of size $54$}

For the representative $\mathcal L_{54}$, the normalization gives
\[
M_2=
\begin{pmatrix}
0&1+\omega\\
\omega&-1
\end{pmatrix},
\qquad
M_3=
\begin{pmatrix}
-\omega&\omega\\
0&-1-\omega
\end{pmatrix},
\qquad
M_4=
\begin{pmatrix}
1+\omega&1\\
0&\omega
\end{pmatrix}.
\]
Thus
\[
        \Gamma_{54}
        =
        \left\langle
        [A-B]\ :\
        A,B\in\{0,I_2,M_2,M_3,M_4\},\ A\neq B
        \right\rangle
        \subseteq \PGL_2(K).
\]
\subsection{The orbit of size $18$}

For the representative $\mathcal L_{18}$, the normalization gives
\[
M_2=
\begin{pmatrix}
0&-1-\omega\\
-\omega&1
\end{pmatrix},
\qquad
M_3=
\begin{pmatrix}
0&1\\
-1&1
\end{pmatrix},
\qquad
M_4=
\begin{pmatrix}
0&\omega\\
1+\omega&1
\end{pmatrix}.
\]
Thus
\[
        \Gamma_{18}
        =
        \left\langle
        [A-B]\ :\
        A,B\in\{0,I_2,M_2,M_3,M_4\},\ A\neq B
        \right\rangle
        \subseteq \PGL_2(K).
\]
The next section compares the two normalized projective representations through their determinant square-class images over $K$ and then relates this arithmetic distinction to their finite reductions.

\section{Determinant square classes}
\label{sec:determinant-square-classes}

In this section, we apply the determinant square-class character $\delta_K$, defined in Section~\ref{ss:determinant square-class}, to the matrix groups $\Gamma_{54}$ and $\Gamma_{18}$ associated with the two Fermat orbits.  We work over
\[
        K=\mathbb Q(\omega),\qquad \omega^2+\omega+1=0,
\]
and regard $K^*/(K^*)^2$ as an $\mathbb F_2$-vector space, with group law induced by multiplication in $K^*$.

\subsection{The two Fermat orbits}

The generators of each group are the projective classes of the ten pairwise differences among the corresponding matrices 
        $0,\quad I_2,\quad M_2,\quad M_3,\quad M_4$
from the previous section.  It is therefore enough to compute the determinant square classes of these ten differences.

The raw determinant data are computed by the Python script in Appendix~\ref{app:fermat-determinants}.  Starting from the normalized matrices of Appendix~\ref{app:fermat-normalization}, the script computes
\[
        \det(A-B),
        \qquad
        A,B\in\{0,I_2,M_2,M_3,M_4\},
\]
for all ten pairwise differences.

For the orbit of size $18$, the output of Appendix~\ref{app:fermat-determinants} gives, up to repetition, the determinant values
        $\{1,3\}.$
Hence
\[
        \delta_K(\Gamma_{18})
        =
        \langle [3]\rangle
        \subset K^*/(K^*)^2.
\]
For the orbit of size $54$, the corresponding determinant values are
\[
        \{-3,-1-2\omega,-1,1,1+2\omega,3\}
        =
        \{\pm1,\ \pm3,\ \pm(1+2\omega)\}.
\]
The signs do not introduce any further square classes.  Indeed, in $K=\mathbb Q(\omega)$ we have
\[
        (1+2\omega)^2
        =
        1+4\omega+4\omega^2
        =
        1+4\omega+4(-1-\omega)
        =
        -3.
\]
Thus $-3$ is a square in $K$, and hence
        $[-3]=[1]$ in  $K^*/(K^*)^2.$
Multiplying this identity by $[3]$, we obtain
        $[-1]=[3].$ 
Therefore the classes of the negative determinants are already contained in the subgroup generated by $[3]$ and $[1+2\omega]$.  Since both $3$ and $1+2\omega$ occur among the determinant values, it follows that
\[
        \delta_K(\Gamma_{54})
        =
        \langle [3],[1+2\omega]\rangle.
\]
We now check that the two classes $[3]$ and $[1+2\omega]$ are nontrivial and independent.

\begin{lemma}
Let $K=\mathbb Q(\omega)$, where $\omega^2+\omega+1=0$. Then the classes $[3]$ and $[1+2\omega]$ are independent in $K^*/(K^*)^2$.
\end{lemma}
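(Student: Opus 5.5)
We must show that none of $[3]$, $[1+2\omega]$, $[3(1+2\omega)]$ is trivial in $K^*/(K^*)^2$. Independence in the $\mathbb F_2$-vector space means exactly that these three classes are all different from $[1]$. Since $(1+2\omega)^2=-3$, we have $3(1+2\omega) = -(1+2\omega)^3$. Hence
\[
[3(1+2\omega)]=[-1][1+2\omega]=[3][1+2\omega],
\]
where the last step uses $[-1]=[3]$, as shown above. So the third class gives nothing new, and we only need to rule out the cases $[3]=[1]$, $[1+2\omega]=[1]$ and $[-(1+2\omega)]=[1]$. The last of these amounts to $[3][1+2\omega]=[1]$, since $[-1]=[3]$.

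The cleanest tool is the ring of integers $\mathbb Z[\omega]$. It is a PID, and $\lambda=1+2\omega=\sqrt{-3}$ is a prime of norm $3$ with $3=-\lambda^2$. The units of $\mathbb Z[\omega]$ are $\pm\omega^k$, a group of order $6$, and the squares among them are $\{1,\omega,\omega^2\}$.

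Suppose $u\lambda^{e}$ is a square in $K$, where $u$ is a unit. Comparing $\lambda$-adic valuations forces $e$ to be even. Hence $\lambda$ and $-\lambda$ are not squares, because their valuation is $1$. This disposes of $[1+2\omega]$ and of $[3][1+2\omega]$.

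For $3=-\lambda^2$, we have $[3]=[-1]$. So $3$ is a square if and only if $-1$ is a square in $K$. Suppose $-1=\alpha^2$ with $\alpha\in K$. Then $\alpha$ is integral, so $\alpha$ is a unit. The squares of units are only $1,\omega,\omega^2$, and $-1$ is not among them. Alternatively: $i\notin\mathbb Q(\sqrt{-3})$, since $\mathbb Q(i)\neq\mathbb Q(\sqrt{-3})$. Therefore $[3]\neq[1]$.

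A fallback that avoids citing that $\mathbb Z[\omega]$ is a PID is to use norms. Put $N(a+b\omega)=a^2-ab+b^2$. Then $N(\lambda)=3$, and $N(x^2)=N(x)^2$ is a rational square, so $\pm\lambda$ cannot be squares. For $-1$, the norm argument fails, because $N(-1)=1$. There one uses instead the field-theoretic fact that $\mathbb Q(i)\neq\mathbb Q(\sqrt{-3})$, which holds because $-1/-3=1/3$ is not a rational square.

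The main point needing care is this $-1$ case. Norms do not detect it, so it genuinely requires the comparison of quadratic fields, or equivalently the determination of the unit group.
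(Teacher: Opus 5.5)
Your proof is correct. For $[1+2\omega]$ and $[3(1+2\omega)]$ it is essentially the paper's argument. The paper also uses the norm $N(a+b\omega)=a^2-ab+b^2$, noting that $N(1+2\omega)=3$ and $N(3(1+2\omega))=27$ are not rational squares. Your rewriting $3(1+2\omega)=-(1+2\omega)^3$ only replaces $27$ by $3$, and your $\lambda$-adic valuation argument is an equivalent alternative to the norm.

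The genuine difference is the class $[3]$, where norms are useless since $N(3)=9$. The paper handles it by direct computation. Writing $3=(a+b\omega)^2=a^2-b^2+(2ab-b^2)\omega$, the $\omega$-coefficient forces $b=0$ or $b=2a$. These give $a^2=3$ or $-3a^2=3$, both impossible in $\mathbb Q$.

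You instead use $3=-\lambda^2$ to reduce to the class $[-1]$. You then rule out $-1$ being a square either through the unit group $\{\pm\omega^k\}$ of $\mathbb Z[\omega]$, or through $\mathbb Q(i)\neq\mathbb Q(\sqrt{-3})$.

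The paper's route is fully self-contained: it uses nothing beyond $\omega^2=-1-\omega$. Yours relies on standard but external facts. These are that $\mathbb Z[\omega]$ is the ring of integers with that unit group, or the criterion that $\mathbb Q(\sqrt d)=\mathbb Q(\sqrt{d'})$ iff $d/d'\in(\mathbb Q^*)^2$. In return, your argument explains why the norm test must fail for $3$: $[3]=[-1]$, and the obstruction is simply that $K$ contains no primitive fourth root of unity.
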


\begin{proof}
The group $K^*/(K^*)^2$ is an $\mathbb F_2$-vector space, since every class has order dividing $2$. Hence two classes $[u]$ and $[v]$ are independent if and only if none of the three nontrivial combinations
\[
[u],\qquad [v],\qquad [uv]
\]
is trivial. Therefore, it is enough to show that none of
\[
3,\qquad 1+2\omega,\qquad 3(1+2\omega)
\]
is a square in $K$.

Since $\omega$ and $\omega^2$ are the two roots of the minimal polynomial $t^2+t+1$ over $\mathbb Q$, the assignment $\omega\mapsto\omega^2$
extends to the nontrivial $\mathbb Q$-automorphism $\sigma:K\to K$. Let $N(x)=x\sigma(x)$ be the norm from $K$ to
$\mathbb Q$. For $a,b\in\mathbb Q$, we have
\[
N(a+b\omega)
=(a+b\omega)(a+b\omega^2)
=a^2-ab+b^2.
\]
In particular, if $x=y^2$ is a square in $K$, then $N(x)=N(y)^2$ is a square in $\mathbb Q$.

We first show that $3$ is not a square in $K$. Suppose that
$3=(a+b\omega)^2$ for some $a,b\in\mathbb Q$. Since
$\omega^2=-1-\omega$, we obtain 
$(a+b\omega)^2
=a^2-b^2+(2ab-b^2)\omega.$ 
Comparing the coefficient of $\omega$ gives $b(2a-b)=0$. If $b=0$,
then $a^2=3$, which is impossible in $\mathbb Q$. If $b=2a$, then
$(a+2a\omega)^2=-3a^2$, so $-3a^2=3$, again impossible. Hence $3$ is
not a square in $K$. 

Next,  $N(1+2\omega)=1-2+4=3.$ 
Since $3$ is not a square in $\mathbb Q$, it follows that $1+2\omega$ is not a square in $K$.

Finally, by multiplicativity of the norm,
\[
N\bigl(3(1+2\omega)\bigr)
=N(3)N(1+2\omega)
=9\cdot 3
=27.
\]
Since $27$ is not a square in $\mathbb Q$, the element
$3(1+2\omega)$ cannot be a square in $K$.

Thus neither $[3]$, nor $[1+2\omega]$, nor their product is trivial in $K^*/(K^*)^2$. Hence $[3]$ and $[1+2\omega]$ are independent.
\end{proof}
\begin{theorem}
For the two groups associated with the Fermat orbits, we have
\[
        \delta_K(\Gamma_{18})\cong \mathbb Z/2,
\qquad \text{and}\qquad 
        \delta_K(\Gamma_{54})\cong (\mathbb Z/2)^2.
\]
Consequently,
\[
        \dim_{\mathbb F_2}H_1(\Gamma_{18};\mathbb F_2)\geq 1,
\qquad
\text{and}
\qquad
        \dim_{\mathbb F_2}H_1(\Gamma_{54};\mathbb F_2)\geq 2.
\]
\end{theorem}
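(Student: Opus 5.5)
The plan is to assemble the determinant data already computed in this section with the preceding Lemma, and then invoke the factorization of $\delta_K$ through mod-$2$ homology from Section~\ref{ss:determinant square-class}.

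\textbf{Step 1: the images are generated by the ten generator classes.} Since $\delta_K$ is a homomorphism and $\Gamma_{18}$, $\Gamma_{54}$ are generated by the classes $[A-B]$ with $A,B\in\{0,I_2,M_2,M_3,M_4\}$, the image $\delta_K(\Gamma)$ is the subgroup of $K^*/(K^*)^2$ generated by the ten classes $[\det(A-B)]$. Inverses add nothing new, because every class has order dividing $2$.

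\textbf{Step 2: compute the two images.} For $\Gamma_{18}$ the determinant values are $\{1,3\}$, so the image is $\langle[3]\rangle$. For $\Gamma_{54}$ the values are $\pm1,\pm3,\pm(1+2\omega)$. Using $(1+2\omega)^2=-3$, I would note $[-1]=[3]$, and hence the image is $\langle[3],[1+2\omega]\rangle$, as already observed in the text.

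\textbf{Step 3: identify the isomorphism types.} By the preceding Lemma, $[3]$ is nontrivial and $[3]$, $[1+2\omega]$ are independent over $\mathbb F_2$. Hence $\delta_K(\Gamma_{18})\cong\mathbb Z/2$ and $\delta_K(\Gamma_{54})\cong(\mathbb Z/2)^2$.

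\textbf{Step 4: deduce the homology bounds.} Because $K^*/(K^*)^2$ is an elementary abelian $2$-group, $\delta_K|_\Gamma$ kills $[\Gamma,\Gamma]\Gamma^2$. It therefore factors through a surjection $H_1(\Gamma;\mathbb F_2)\to\delta_K(\Gamma)$, which gives $\dim H_1\ge\dim\delta_K(\Gamma)$, namely $1$ and $2$ respectively.

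\textbf{Main obstacle.} The only nontrivial input is the determinant table produced in Appendix~\ref{app:fermat-determinants}; I would recheck it by hand from the displayed matrices. For $\Gamma_{54}$ it is enough to exhibit one difference with determinant in the class $[3]$ (or $[-1]$) and one with determinant $\pm(1+2\omega)$. For example, $\det M_4=(1+\omega)\omega=-1$, which lies in the class $[-1]=[3]$. Finding an explicit difference with determinant $\pm(1+2\omega)$ is the step I expect to require actual computation from the displayed matrices. For $\Gamma_{18}$, I would confirm that every difference has determinant $1$ or $3$ (for instance $\det M_3=1$), and that at least one has determinant $3$.
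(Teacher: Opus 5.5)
Your proposal is correct and follows essentially the same route as the paper: you read off $\delta_K(\Gamma_{18})=\langle[3]\rangle$ and $\delta_K(\Gamma_{54})=\langle[3],[1+2\omega]\rangle$ from the ten generator determinants, use the Lemma for nontriviality and independence, and factor $\delta_K|_\Gamma$ through $\Gamma/[\Gamma,\Gamma]\Gamma^2\cong H_1(\Gamma;\mathbb F_2)$ to get the bounds. The witness you expected to need a computation is immediate: for the size-$54$ matrices $\det(M_3-I_2)=(1+\omega)(2+\omega)=1+2\omega$, and for the size-$18$ matrices $\det(M_2-M_4)=-(1+2\omega)^2=3$.
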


\begin{proof}
The computations above show that
$\delta_K(\Gamma_{18})=\langle[3]\rangle$ and $\delta_K(\Gamma_{54})=\langle[3],[1+2\omega]\rangle.$ 
By the lemma, the class $[3]$ is nontrivial, while $[3]$ and
$[1+2\omega]$ are independent. Therefore
$\delta_K(\Gamma_{18})\cong \mathbb Z/2$
 and 
$\delta_K(\Gamma_{54})\cong (\mathbb Z/2)^2$.

Since $K^*/(K^*)^2$ is an elementary abelian $2$-group, the restriction of
$\delta_K$ to either $\Gamma\in\{\Gamma_{18},\Gamma_{54}\}$
vanishes on commutators and squares.
Hence it factors through 
$\Gamma/[\Gamma,\Gamma]\Gamma^2
\cong H_1(\Gamma;\mathbb F_2).$
Thus $\delta_K(\Gamma)$ is a quotient of
$H_1(\Gamma;\mathbb F_2)$, and therefore
$\dim_{\mathbb F_2}H_1(\Gamma;\mathbb F_2)
\geq \dim_{\mathbb F_2}\delta_K(\Gamma).$
The stated inequalities follow.
\end{proof}

\section{Distinguishing the two Fermat orbits}
\label{sec:distinguishing-fermat-orbits}

The preceding section shows that the two Fermat orbit types give normalized projective representations with different determinant square-class images:
\[
        \delta_K(\Gamma_{18})
        \cong \mathbb Z/2,
\qquad \text{and} \qquad
        \delta_K(\Gamma_{54})
        \cong (\mathbb Z/2)^2.
\]
In this section we compare this arithmetic distinction with the finite reductions of the two representations and verify that the resulting modulo-$13$ behavior is independent of the chosen normalization.

\subsection{Finite images modulo primes}

The determinant square-class distinction has a visible congruence shadow.  We compute the finite images of the two normalized representations modulo several primes $p\equiv 1\pmod 3.$ For such a prime, the polynomial
        $t^2+t+1$ 
splits over $\mathbb F_p$.  We choose an element
$r\in\mathbb F_p$ satisfying 
        $r^2+r+1=0$ 
and reduce the matrix entries by sending
\[
        \omega\longmapsto r.
\]
This gives finite subgroups
\[
        \overline{\Gamma}_{54}^{(p)},
        \quad
        \overline{\Gamma}_{18}^{(p)}
        \subseteq \PGL_2(\mathbb F_p).
\]
The computation in Appendix~\ref{app:fermat-finite-images} records the orders of these finite images.  The generators are obtained by first reducing
\[
        0,\quad I_2,\quad M_2,\quad M_3,\quad M_4
\]
modulo $p$, then forming their ten pairwise differences, and finally taking the corresponding projective classes.  In particular, the individual matrices $M_i$ are not normalized projectively before the differences are
formed.

\[
\begin{array}{c|c|cc|c}
p & r
& |\overline{\Gamma}_{54}^{(p)}|
& |\overline{\Gamma}_{18}^{(p)}|
& |\PGL_2(\mathbb F_p)|
\\
\hline
7  & 2  & 336   & 336   & 336\\
13 & 3  & 2184  & 1092  & 2184\\
19 & 7  & 6840  & 6840  & 6840\\
31 & 5  & 29760 & 29760 & 29760\\
37 & 10 & 25308 & 25308 & 50616\\
43 & 6  & 79464 & 79464 & 79464.
\end{array}
\]

For odd $p$, we have 
        $|\PGL_2(\mathbb F_p)|=p(p^2-1),
        \ 
        |\PSL_2(\mathbb F_p)|=\frac{p(p^2-1)}{2}.$ 
        
Combining these orders with the determinant square classes of the generators, the finite images may be identified as follows:
\[
\begin{array}{c|cc}
p
& \overline{\Gamma}_{54}^{(p)}
& \overline{\Gamma}_{18}^{(p)}
\\
\hline
7  & \PGL_2(\mathbb F_7)    & \PGL_2(\mathbb F_7)\\
13 & \PGL_2(\mathbb F_{13}) & \PSL_2(\mathbb F_{13})\\
19 & \PGL_2(\mathbb F_{19}) & \PGL_2(\mathbb F_{19})\\
31 & \PGL_2(\mathbb F_{31}) & \PGL_2(\mathbb F_{31})\\
37 & \PSL_2(\mathbb F_{37}) & \PSL_2(\mathbb F_{37})\\
43 & \PGL_2(\mathbb F_{43}) & \PGL_2(\mathbb F_{43}).
\end{array}
\]
The prime $13$ is especially useful for the present comparison.  With the choice
\[
        \omega\longmapsto 3\in\mathbb F_{13},
\]
the nonzero squares in $\mathbb F_{13}$ are $\{1,3,4,9,10,12\}.$
In particular, $ 3=4^2$ in $\mathbb F_{13},$ so the class $[3]$ becomes trivial in 
        $\mathbb F_{13}^{*}/(\mathbb F_{13}^{*})^2.$
 
On the other hand,
\[
        1+2\omega\longmapsto 1+2\cdot3=7,
\qquad
\text{and}
\qquad
        7\notin\{1,3,4,9,10,12\}.
\]
Hence $[1+2\omega]$ maps to the nontrivial square class modulo $13$.

Therefore the determinant character modulo $13$ is trivial for the orbit of size $18$, while it is nontrivial for the orbit of size $54$.  The finite
image computation sharpens this observation:
\[
        \overline{\Gamma}_{18}^{(13)}
        =
        \PSL_2(\mathbb F_{13}),
        \qquad
        \overline{\Gamma}_{54}^{(13)}
        =
        \PGL_2(\mathbb F_{13}).
\]

This congruence computation is supplementary to the determinant square-class argument over $K=\mathbb Q(\omega).$  The square-class computation is independent of the choice of a prime, whereas the finite images modulo $p$ depend on the chosen reduction.  Nevertheless, the modulo $13$ image gives a concrete finite field shadow of the distinction between the two Fermat orbit types.

\subsection{Independence of the normalization triple}

The previous computation uses one normalization for each orbit representative. As a further check, Appendix~\ref{app:fermat-mod13} runs through all $6\cdot5\cdot4=120$ ordered choices of the normalization triple
        $(L_0,L_\infty,L_1)$
inside each representative sixer.

The output is summarized in the following table:
\[
\begin{array}{c|c|c}
\text{orbit}
& \begin{array}{c}
     \text{number of nonsquare determinants} \\
     \text{among the ten generators}
\end{array}
& \text{number of normalizations}
\\
\hline
\mathcal O_{54} & 4 & 72\\
\mathcal O_{54} & 6 & 48\\
\mathcal O_{18} & 0 & 120.
\end{array}
\]
Thus every normalization of the representative in $\mathcal O_{18}$ has trivial determinant character modulo $13$, whereas every normalization of the representative in $\mathcal O_{54}$ has nontrivial determinant character modulo $13$. This shows that the modulo-$13$ distinction is not an artifact of the chosen ordered triple $(L_0,L_\infty,L_1)$.

\subsection{What is distinguished}

The computations above distinguish the projective representations associated with the two Fermat orbit types. In particular, their determinant square-class images are different:
\[
        \delta_K(\Gamma_{18})\cong\mathbb Z/2,
        \qquad
        \delta_K(\Gamma_{54})\cong(\mathbb Z/2)^2.
\]
However, these computations do not by themselves imply that 
        $\Gamma_{18}\not\cong\Gamma_{54}$ 
as abstract groups.  They distinguish the arithmetic projective representations over $K=\mathbb Q(\omega)$ and their finite reductions modulo primes, but the abstract isomorphism problem for the two groups remains open.

\section{Discussion and open questions}
\label{sec:discussion}

The computations in this paper show that the two automorphism orbits of sixers on the Fermat cubic give projective representations with different determinant square-class images over $K=\mathbb Q(\omega)$. This distinction is arithmetic: it belongs to the representations over their field of definition, and the determinant square-class invariant becomes trivial after extending scalars to $\mathbb C$. The determinant square-class character also gives the lower bounds  $\dim_{\mathbb F_2}H_1(\Gamma_{18};\mathbb F_2)\ge~1,$ $\dim_{\mathbb F_2}H_1(\Gamma_{54};\mathbb F_2)\ge 2.$

However, it is not a complete invariant of the abstract groups.  In
particular, the computations do not by themselves imply that
 $\Gamma_{18}\not\cong\Gamma_{54}.$
This leads to the following question.

\begin{question}
Is  $\dim_{\mathbb F_2}H_1(\Gamma_{18};\mathbb F_2)=1?$
Equivalently, is the determinant square-class character the only nontrivial
homomorphism  $\Gamma_{18}\longrightarrow\mathbb Z/2?$
\end{question}

A positive answer would imply that
       $ \Gamma_{18}\not\cong\Gamma_{54}$
as abstract groups, since $\Gamma_{54}$ has at least two independent
mod-$2$ characters.

One possible approach is to compute presentations for $\Gamma_{18}$ and
$\Gamma_{54}$, or for suitable finite-index subgroups of ambient arithmetic
groups, and then compute their mod-$2$ abelianizations; see, for instance, \cite{MaclachlanReid, Sims}. 

A second direction concerns the relation between the arithmetic and geometric distinctions obtained in this paper. The plane blow-up models give a direct geometric explanation of the two orbit sizes: the corresponding six-point configurations have projective automorphism groups of orders \(36\) and \(12\).
It remains to understand whether this difference in plane symmetry is conceptually related to the determinant square-class distinction between \(\Gamma_{18}\) and \(\Gamma_{54}\).

Thus the paper compares three levels of structure:
\[
        \text{sixer orbit},
        \qquad
        \text{associated projective group},
        \qquad
        \text{plane blow-up model}.
\]
The orbit decomposition is explained geometrically by the automorphism groups of the corresponding six-point configurations, while the associated projective groups are distinguished arithmetically by their determinant square-class images. The remaining problems are to determine whether the associated groups are abstractly non-isomorphic and to clarify the relation between the geometric and arithmetic distinctions.

\section{Declarations}

\paragraph{\bf Reproducibility.}
The computations reported in this paper were carried out in Python and
\textsc{Singular}~\cite{Singular}. We use Python for the enumeration of sixers, the automorphism orbit decomposition, the normalization of orbit representatives, the determinant square-class computations, the finite-image computations, and the modulo $13$ normalization check. Each script is accompanied by the corresponding raw output. The \textsc{Singular} computations discussed in Section \ref{sec:blowup-interpretation} construct the two plane blow-up models, verify their projective equivalence, and compute the projective automorphism groups of the corresponding six-point configurations.

\paragraph{\bf Acknowledgments}
Favacchio is a member of GNSAGA-INdAM. 

\paragraph{\bf Funding}
The work of Favacchio was supported by the funding PREMIO\_SINGOLI\_RIC\_[2025] from the Department of Engineering, University of Palermo.

\paragraph{\bf Code availability.}
A public repository containing the source files is available at \cite{FermatSixersCode}.

\paragraph{\bf Declaration of interest statement}
No potential competing interest was reported by the authors.

\paragraph{\bf Generative AI disclosure.}
 The authors used ChatGPT (OpenAI) as an assistive tool for language editing and for limited assistance with debugging, refactoring, and improving the readability of parts of the Python code. All mathematical statements, proofs, algorithms, computational outputs, and interpretations were reviewed and verified by the authors, who take full responsibility for the content and reproducibility of the work.

\appendix

\section{Fermat sixer computation}
\label{app:fermat-computation}
The script \texttt{fermat\_sixers.py} is the source of the numerical statements in
Section~\ref{sec:fermat-computation}. The script constructs the $27$ lines on the Fermat cubic, determines the skew pairs
and the sixers, constructs the induced action of
$\operatorname{Aut}(S_F)$, and computes its orbits on the set of sixers.

\subsection*{Output} The relevant output is:
\begin{verbatim}
number of lines = 27
number of skew pairs = 216
number of sixers = 72
number of automorphisms = 648
orbit sizes = [54, 18]
\end{verbatim}

The two orbit representatives used in the subsequent computations are
\begin{verbatim}
orbit size 54 representative (0, 4, 10, 12, 20, 25)
orbit size 18 representative (0, 4, 8, 10, 14, 15)
\end{verbatim}

The script also prints the complete list of sixers in each orbit; we
record here only the orbit sizes and the representatives used in the
subsequent computations.

\section{Normalization of the Fermat representatives}
\label{app:fermat-normalization}

The script \texttt{fermat\_normalization.py} normalizes the two
representative sixers obtained in Appendix~\ref{app:fermat-computation},
\[
        \mathcal L_{54}=\{0,4,10,12,20,25\},
        \qquad
        \mathcal L_{18}=\{0,4,8,10,14,15\}.
\]
For each sixer, we choose the first three lines as $L_0,\ L_\infty,\ L_1.$ The remaining lines are written as graphs of linear maps, and the
normalization $M_i\mapsto M_1^{-1}M_i$ is applied so that $L_1$
corresponds to $I_2$.

\subsection*{Output}

Running the script gives the following output:
\begin{multicols}{2}
\begin{verbatim}
orbit 54 representative:
(0, 4, 10, 12, 20, 25)
chosen normalization:
L0 = 0 Linf = 4 L1 = 10

M2 =
[0, 1+omega]
[omega, -1]

M3 =
[-omega, omega]
[0, -1-omega]

M4 =
[1+omega, 1]
[0, omega]
orbit 18 representative:
(0, 4, 8, 10, 14, 15)
chosen normalization:
L0 = 0 Linf = 4 L1 = 8

M2 =
[0, -1-omega]
[-omega, 1]

M3 =
[0, 1]
[-1, 1]

M4 =
[0, omega]
[1+omega, 1]
\end{verbatim}
\end{multicols}
These are the normalized matrices used in
Appendix~\ref{app:fermat-determinants}.

\section{Finite reductions of the Fermat associated groups}
\label{app:fermat-finite-images}

The script \texttt{fermat\_finite\_reductions.py} computes the orders of the finite images of the two normalized Fermat representations modulo primes $p\equiv 1 \pmod 3.$

For each such prime we choose an element $r\in \mathbb F_p$ satisfying 
$r^2+r+1=0$ 
and reduce by sending $\omega\mapsto r$. The generators are obtained from the ten pairwise differences among  $0,\  I_2,\  M_2,\  M_3,\  M_4.$ The individual matrices $M_i$ are not normalized projectively before taking
differences.  The projective normalization is applied only to the resulting
generators $A-B$ in $\PGL_2(\mathbb F_p)$.
\subsection*{Output} The output gives
\[
\begin{array}{c|c|cc|c}
p & r
& |\overline{\Gamma}_{54}^{(p)}|
& |\overline{\Gamma}_{18}^{(p)}|
& |\PGL_2(\mathbb F_p)|
\\
\hline
7  & 2  & 336   & 336   & 336\\
13 & 3  & 2184  & 1092  & 2184\\
19 & 7  & 6840  & 6840  & 6840\\
31 & 5  & 29760 & 29760 & 29760\\
37 & 10 & 25308 & 25308 & 50616\\
43 & 6  & 79464 & 79464 & 79464.
\end{array}
\]
For odd $p$,  $|\PGL_2(\mathbb F_p)|=p(p^2-1),
        \quad
        |\PSL_2(\mathbb F_p)|=\frac{p(p^2-1)}{2}.$
Combining these orders with the determinant square classes of the generators, for the primes tested above the finite images are
\[
\begin{array}{c|cc}
p
& \overline{\Gamma}_{54}^{(p)}
& \overline{\Gamma}_{18}^{(p)}
\\
\hline
7  & \PGL_2(\mathbb F_7)    & \PGL_2(\mathbb F_7)\\
13 & \PGL_2(\mathbb F_{13}) & \PSL_2(\mathbb F_{13})\\
19 & \PGL_2(\mathbb F_{19}) & \PGL_2(\mathbb F_{19})\\
31 & \PGL_2(\mathbb F_{31}) & \PGL_2(\mathbb F_{31})\\
37 & \PSL_2(\mathbb F_{37}) & \PSL_2(\mathbb F_{37})\\
43 & \PGL_2(\mathbb F_{43}) & \PGL_2(\mathbb F_{43}).
\end{array}
\]
In particular, modulo $13$, the two normalized representations have
different finite images:
\[
        \overline{\Gamma}_{54}^{(13)}
        =
        \PGL_2(\mathbb F_{13}),
        \qquad
        \overline{\Gamma}_{18}^{(13)}
        =
        \PSL_2(\mathbb F_{13}).
\]
This is the finite-image computation used in
Section~\ref{sec:distinguishing-fermat-orbits}.

\section{Fermat determinant computation}
\label{app:fermat-determinants}

The script \texttt{fermat\_determinants.py} takes as input the normalized matrices computed in Appendix~\ref{app:fermat-normalization} and computes the determinants of the ten pairwise differences among $0,\  I_2,\  M_2,\  M_3,\  M_4.$
These are the determinant data used in Section~\ref{sec:determinant-square-classes}.
The computation is carried out in the Eisenstein integer ring  $\mathbb Z[\omega],\  \omega^2+\omega+1=0,$
with elements represented as pairs $a+b\omega$.
\subsection*{Output}
Running the script gives the following output:

\begin{verbatim}
orbit 54 determinant list:
[1, 1, -1, -1, 3, 1+2*omega, -1-2*omega, -1-2*omega, 1+2*omega, -3]
orbit 54 determinant set:
[-3, -1-2*omega, -1, 1, 1+2*omega, 3]

orbit 18 determinant list:
[1, 1, 1, 1, 1, 1, 1, 3, 3, 3]
orbit 18 determinant set:
[1, 3]
\end{verbatim}
These are the determinant data used in Section~\ref{sec:determinant-square-classes}.

\section{Modulo $13$ normalization check}
\label{app:fermat-mod13}

The determinant computation in Appendix~\ref{app:fermat-determinants} uses one chosen normalization for each of the two Fermat orbit representatives. The script \texttt{fermat\_mod13\_normalizations.py} performs an additional finite-field check modulo $13$. For each representative sixer, it runs through all ordered choices of the normalization triple  $(L_0,L_\infty,L_1).$
There are $6\cdot 5\cdot 4=120$ such choices for each sixer. We reduce by sending
        $\omega\longmapsto 3\in\mathbb F_{13},$
since $ 3^2+3+1\equiv 0\pmod{13}.$
For each normalization, the script computes the determinants of the ten
pairwise differences among
        $0,\  I_2,\ M_2,\  M_3,\  M_4$
and records how many of these determinants are nonsquares in $\mathbb F_{13}$.

\subsection*{Output}

Running the script gives the following output:

\begin{verbatim}
orbit 54 nonsquare determinant counts: {4: 72, 6: 48}
orbit 18 nonsquare determinant counts: {0: 120}
\end{verbatim}
Thus, for the orbit of size $54$, $72$ of the $120$ normalizations
yield four nonsquare determinants, while the remaining $48$ yield six.
For the orbit of size $18$, all $120$ normalizations yield only square
determinants.
These are the normalization profiles used in
Section~\ref{sec:distinguishing-fermat-orbits} for the modulo $13$ check.

\section{Singular computations for the plane blow-up models}
\label{app:singular-blowup}

The computations described in Section \ref{sec:blowup-interpretation} were carried out and tested in \textsc{Singular}, version~4.4.1. Where necessary, the scripts include separate commands intended to accommodate the output format used by earlier versions of \textsc{Singular}. The complete source files and the corresponding output are available in the accompanying repository \cite{FermatSixersCode}. In this appendix we describe the mathematical structure of the computations.

\subsection*{Construction of the two blow-up models}

The first script starts with the configuration
\[Z_{18}=\left\{
\begin{aligned}
&(1:0:0),\ (0:1:0),\ (0:0:1),\ (1:1:1),\\
&(1:\omega:\omega^2),\ (1:\omega^2:\omega)
\end{aligned}
\right\}.
\]
It computes a basis of the cubic forms vanishing at these six points and constructs the graph of the associated anticanonical map
\[\Bl_{Z_{18}}\PP^2\longrightarrow\PP^3.\]
The image is the cubic surface \(y^2z-yz^2-x^2w+xw^2=0.\) 

The script then constructs the $27$ lines on this surface in the standard
blow-up notation
\[E_i,\qquad L_{ij},\qquad Q_i,\]
and identifies them with the explicitly indexed Fermat lines. The intersection relations among these curves are used to generate all $72$ sixers. For the sixer \(\mathcal S_{54}=\{L_{12},L_{16},L_{26},Q_3,Q_4,Q_5\},\) the script constructs the linear system \(\left |4H-2E_1-2E_2-E_3-E_4-E_5-2E_6 \right|.\)
In the original plane this is the system of quartics having double points at $p_1,p_2,p_6$ and passing through $p_3,p_4,p_5$. The images of the six curves in $\mathcal S_{54}$ under the resulting contraction give the configuration $Z_{54}$.
Blowing up $Z_{54}$ and applying its anticanonical map produces a second cubic surface in $\PP^3$. The script verifies its projective equivalence with the original model by an explicit linear transformation. It also checks that the six exceptional curves of the second blow-up correspond, under this transformation, to
\[
L_{12},\quad L_{16},\quad L_{26},\quad
Q_3,\quad Q_4,\quad Q_5.
\]

The auxiliary procedures in the script implement the standard operations needed in these computations: constructing lines and conics through selected points, computing graph ideals of rational maps, eliminating source coordinates, and comparing homogeneous ideals.

\subsection*{Projective automorphisms of the point configurations}

Two further scripts compute
\[\Aut_{\PP^2}(Z_{18}) \qquad \text{and}\qquad \Aut_{\PP^2}(Z_{54}).\]
For each of the $6!=720$ permutations $\sigma$ of the six points, the scripts introduce a generic $3\times 3$ matrix $T$ and six auxiliary scalars $t_1,\ldots,t_6$, and impose the equations
\[
T P_i=t_iP_{\sigma(i)}, \qquad i=1,\ldots,6.
\]
The parameters $t_i$ are eliminated, and saturation is used to remove the component corresponding to the zero matrix. A permutation is retained exactly when the resulting system admits a nonzero solution. One representative matrix is then obtained by choosing an affine chart in the projective space of matrices.

The computations give
\[|\Aut_{\PP^2}(Z_{18})|=36, \qquad |\Aut_{\PP^2}(Z_{54})|=12.
\]
Together with \(|\Aut(S_F)|=648,\) these values recover the orbit sizes
\[
\frac{648}{36}=18,\qquad\frac{648}{12}=54.
\]


\begin{thebibliography}{99}

\bibitem{politus3}
L.~Chiantini, {\L}.~Farnik, G.~Favacchio, B.~Harbourne,
J.~Migliore, T.~Szemberg, and J.~Szpond,
\emph{Combinatorics of skew lines in $\mathbb P^3$ with an application to algebraic geometry},
arXiv:2308.00761v2, 2025.

\bibitem{politus7}
L.~Chiantini, {\L}.~Farnik, G.~Favacchio, B.~Harbourne,
J.~Migliore, T.~Szemberg, and J.~Szpond,
\emph{Enumerative geometry of skew lines in $\mathbb P^3$ with a given associated finite group},
arXiv:2607.03539, 2026.


\bibitem{Singular}
W.~Decker, G.-M. Greuel, G.~Pfister, oraz H.~Sch\"onemann.
\newblock {\sc Singular} {4-4-1} --- {A} computer algebra system for polynomial computations, 2025.

\bibitem{Dolgachev}
I.~V. Dolgachev,
\emph{Classical Algebraic Geometry: A Modern View},
Cambridge University Press, 2012.

\bibitem{DolgachevDuncan}
I.~V. Dolgachev and A.~Duncan,
\emph{Automorphisms of cubic surfaces in positive characteristic},
Izvestiya: Mathematics \textbf{83} (2019), no.~3, 424--475.
Also available as arXiv:1712.01167.


\bibitem{Favacchio2025}
G.~Favacchio,
\emph{Finite subgroups of $\operatorname{PGL}_2(K)$ arising from configurations of skew lines in $\mathbb P^3_K$},
arXiv:2512.19811, 2025.


\bibitem{FermatSixersCode}
G.~Favacchio and G.~Malara,
\emph{Computational files for ``On the Geometry of Sixers on the Fermat Cubic Surface''},
GitHub repository,
\url{https://github.com/GrzMal/sixers-on-the-fermat-cubic-surface}.

\bibitem{HarrisAG}
J. Harris.
\newblock Algebraic Geometry: A First Course.
Graduate Texts in Mathematics, Vol.~133,
Springer-Verlag,
1992.

\bibitem{Hartshorne}
R.~Hartshorne,
\emph{Algebraic Geometry},
Graduate Texts in Mathematics, vol.~52,
Springer, New York, 1977.

\bibitem{HoltEickOBrien}
D.~F. Holt, B.~Eick, and E.~A. O'Brien,
\emph{Handbook of Computational Group Theory},
Discrete Mathematics and its Applications,
Chapman \& Hall/CRC, 2005.

\bibitem{Klein1873}
F.~Klein,
\emph{Ueber Fl\"achen dritter Ordnung},
Mathematische Annalen \textbf{6} (1873), 551--581.


\bibitem{MaclachlanReid}
C.~Maclachlan and A.~W. Reid,
\emph{The Arithmetic of Hyperbolic 3-Manifolds},
Graduate Texts in Mathematics, Vol.~219,
Springer, 2003.




\bibitem{Manin}
Yu.~I. Manin,
\emph{Cubic Forms: Algebra, Geometry, Arithmetic},
2nd ed., North-Holland Mathematical Library, Vol.~4,
North-Holland, Amsterdam, 1986.

\bibitem{Schlaefli}
L.~Schl\"afli,
\emph{An attempt to determine the twenty-seven lines upon a surface of the
third order, and to derive such surfaces in species, in reference to the
reality of the lines upon the surface},
Quarterly Journal of Pure and Applied Mathematics \textbf{2} (1858), 110--120.


\bibitem{Sims}
C.~C. Sims,
\emph{Computation with Finitely Presented Groups},
Encyclopedia of Mathematics and its Applications, Vol.~48,
Cambridge University Press, 1994.


\end{thebibliography}
\end{document}